\definecolor{darkred}{RGB}{180,0,0}
\definecolor{darkblue}{RGB}{0,0,180}
\newtheorem{thm}{Theorem}[section]
\newtheorem{theorem}[thm]{Theorem}
\newtheorem{lem}[thm]{Lemma}
\newtheorem{fact}[thm]{Fact}
\newtheorem{cor}[thm]{Corollary}
\theoremstyle{definition}
\newtheorem{defi}[thm]{Definition}
\def\R{{\mathbb R}}
\def\N{{\mathbb N}}
\def\Z{{\mathbb Z}}
\def\cO{{\mathcal O}}
\def\T{{\mathbb T}}
\def\eps{\varepsilon}
\newcommand*\dd{\mathop{}\!\mathrm{d}}
\DeclareMathOperator{\id}{id}
\DeclareMathOperator{\im}{im}
\DeclareMathOperator{\diam}{diam}
\DeclareMathOperator{\dom}{dom}
\newcommand{\acts}{\ensuremath\mathrel{\raisebox{.6pt}{$\curvearrowright$}}}
\newcommand*{\defeq}{\mathrel{\vcenter{\baselineskip0.5ex \lineskiplimit0pt
                     \hbox{\scriptsize.}\hbox{\scriptsize.}}}%
                     =}
\newcommand*{\eqdef}{=\mathrel{\vcenter{\baselineskip0.5ex \lineskiplimit0pt
                     \hbox{\scriptsize.}\hbox{\scriptsize.}}}}
\newcommand*{\myhash}{%
  \begin{tikzpicture}
    \pgfmathsetlengthmacro\myWidth{.9*.8*width("=")}%
    \pgfmathsetlengthmacro\myHeight{.9*height("H")}%
    \pgfmathsetlengthmacro\mySepY{.37*\myHeight}%
    \pgfmathsetlengthmacro\mySideBearing{.1*\myWidth}%
    \def\myAngle{74}%
    \pgfmathsetlengthmacro\mySepX{\mySepY/sin(\myAngle)}%
    \pgfmathsetlengthmacro\mySlantX{\myHeight*cot(\myAngle)}%
    \pgfmathsetlengthmacro\mojaPoprawka{.37*\mySlantX}%
    \draw[line cap=round]
      (\mojaPoprawka, {(\myHeight + \mySepY)/2}) -- ++(\myWidth, 0);
    \draw[line cap=round]
      (0, {(\myHeight - \mySepY)/2}) -- ++(\myWidth, 0)

      ({(\myWidth - \mySepX - \mySlantX)/2+\mojaPoprawka/2}, 0)
      -- ({(\myWidth - \mySepX + \mySlantX)/2+\mojaPoprawka/2}, \myHeight)
      ({(\myWidth + \mySepX - \mySlantX)/2+\mojaPoprawka/2}, 0)
      -- ({(\myWidth + \mySepX + \mySlantX)/2+\mojaPoprawka/2}, \myHeight)
    ;%
    \useasboundingbox
      (-\mySideBearing, 0)
      (\myWidth + \mySideBearing, \myHeight)
    ;%
  \end{tikzpicture}%
}
\title{Super-expanders and warped cones}
\dedicatory{French title: Superexpanseurs et c\^ones tordus}
\author{Damian Sawicki}
\address{Institute of Mathematics\newline
Polish Academy of Sciences\newline
\'Sniadeckich 8\newline
00-656 Warszawa (Poland)\newline}
\curraddr{\newline
Max-Planck-Institut f\"{u}r Mathematik\newline
Vivatsgasse 7\newline
53111 Bonn (Germany)}
\urladdr{guests.mpim-bonn.mpg.de/dsawicki/}
\keywords{Expander graph; spectral gap; warped cone; quasi-isometry; Rademacher type. \textit{In French:} graphe expanseur; trou spectral; c\^one tordu; quasi-isom\'{e}trie; type de Rademacher}
\subjclass[2010]{Primary 46B85; Secondary 05C25, 37A30, 37C85.}
\begin{document}
\begin{abstract}
For a Banach space $X$, we show that any family of graphs quasi-isometric to levels of a warped cone $\mathcal O_\Gamma Y$ is an expander with respect to $X$ if and only if the induced $\Gamma$-rep\-re\-sen\-ta\-tion on $L^2(Y;X)$ has a spectral gap. This provides examples of graphs that are an expander with respect to all Banach spaces of non-trivial type.

\smallskip \noindent \textsc{In French.} Pour un espace de Banach $X$, on montre que toute famille de graphes, quasi-isom\'etriques \`a des niveaux d'un c\^one tordu $\mathcal O_\Gamma Y$, est un expanseur relativement \`a $X$, si et seulement si la $\Gamma$-re\-pr\'e\-sen\-ta\-tion induite sur $L^2(Y;X)$ a un trou spectral. Ceci fournit des examples de graphes qui sont un expanseur relativement \'a tous les espaces de Banach de type non trivial.
\end{abstract}

\maketitle

\section{Results}

The aim of this paper is to prove expansion with respect to large classes of Banach spaces for new families of graphs. The graphs are 1-skeleta of Rips complexes of a family of compact metric spaces $(tY,d_\Gamma)_{t > 0}$, which originates in an action of a finitely generated group $\Gamma$ on a compact metric space $Y$. Such families were introduced by J.\ Roe under the name of warped cones \cite{Roe-cones}, and the idea of studying graphs quasi-isometric to them comes from Vigolo \cite{Vigolo}.

We prove that the spectral gap of the Koopman representation on the Bochner space $L^2(Y,\mu;X)$ is equivalent to the ex\-pan\-sion of these graphs with respect to the Banach space $X$. Since expansion is a quasi-isometry invariant (\cref{expansion-invariant}), the same can be said about any family of graphs quasi-isometric to the warped cone~$\cO_\Gamma Y$.

\begin{theorem}\label{main}
Let $(Y,d,\mu)$ be a geodesic Ahlfors regular compact metric measure space with an action by Lipschitz homeomorphisms of a finitely generated group $\Gamma$ and $X$ be any Banach space.

Then, the action has a spectral gap in $L^2(Y,\mu;X)$ if and only if the family $(tY,d_\Gamma)_{t >0} = \cO_\Gamma Y$ is quasi-isometric to an expander with respect to $X$.
\end{theorem}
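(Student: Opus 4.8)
The plan is to build, for each scale $t>0$, an explicit bounded-geometry graph $G_t$ that is quasi-isometric to the warped-cone level $(tY, d_\Gamma)$, and then to show that the $X$-Poincaré constants of the family $(G_t)_{t>0}$ are controlled, in both directions, by the spectral gap of the induced representation $\pi$ on $L^2(Y,\mu;X)$. Fix a finite symmetric generating set $S$ of $\Gamma$. Using Ahlfors $Q$-regularity I would choose for each $t$ a maximal $1/t$-separated net $N_t \subseteq Y$ and take it as the vertex set of $G_t$. Regularity then guarantees that $|N_t| \asymp t^Q$, that $G_t$ has degree bounded uniformly in $t$, and that the normalized counting measure on $N_t$ is comparable to $\mu$; consequently $\ell^2(N_t;X)$ and $L^2(Y,\mu;X)$ are uniformly isomorphic through a restriction map $R_t$ and a partition-of-unity extension $E_t$ with $R_t E_t \approx \id$. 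The edges of $G_t$ come in two kinds: \emph{short} edges joining net points at $Y$-distance $\lesssim 1/t$, and \emph{orbit} edges joining $x$ to the net point nearest $s\cdot x$ for $s\in S$. By the definition of the warped metric these are precisely the pairs at $d_\Gamma$-distance $O(1)$, so $G_t$ is quasi-isometric to $(tY,d_\Gamma)$.

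The heart of the argument is a dictionary translating the graph data of $G_t$ into the two quantities appearing in the spectral-gap inequality. On the variance side, for $\varphi:N_t\to X$ with extension $f=E_t\varphi$ the normalized sum $|N_t|^{-2}\sum_{u,v}\|\varphi(u)-\varphi(v)\|_X^2$ is comparable, uniformly in $t$, to $\bigl\|f-\int_Y f\,d\mu\bigr\|_{L^2(Y;X)}^2$, because the counting measure approximates $\mu$. On the energy side I would split the edge sum into its short and orbit parts. An orbit edge $\{x,s\cdot x\}$ contributes $\|f(x)-(\pi(s)f)(x)\|_X^2$, so after discretization the orbit part is comparable to $\sum_{s\in S}\|\pi(s)f-f\|_{L^2(Y;X)}^2$; here the Lipschitz hypothesis on the action controls the error from replacing $s\cdot x$ by the nearest net point. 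The short part is a discrete Dirichlet energy of $f$ at scale $1/t$.

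For the forward direction, assume $\pi$ has a spectral gap $\kappa>0$. Given $\varphi$, set $f=E_t\varphi$ and apply the spectral-gap inequality,
\[
\Bigl\|f-\int_Y f\,d\mu\Bigr\|_{L^2(Y;X)}^2 \;\le\; \kappa^{-1}\sum_{s\in S}\|\pi(s)f-f\|_{L^2(Y;X)}^2 .
\]
Through the dictionary the right-hand side is dominated by the orbit energy, hence by the full graph energy of $\varphi$, while the left-hand side dominates the graph variance of $\varphi$; this yields an $X$-Poincaré inequality for $G_t$ with constant $\asymp\kappa^{-1}$, independent of $t$, so $(G_t)$ is an $X$-expander. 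For the converse, assume $(G_t)$ is an $X$-expander with constant $C$. For a mean-zero $f\in L^2(Y,\mu;X)$ and large $t$, discretize by $\varphi=R_t f$ and apply the expander inequality; translating both sides gives
\[
\|f\|_{L^2(Y;X)}^2 \;\lesssim\; C\Bigl(\sum_{s\in S}\|\pi(s)f-f\|_{L^2(Y;X)}^2 + \mathcal{E}^{\mathrm{short}}_t(f)\Bigr),
\]
where $\mathcal{E}^{\mathrm{short}}_t(f)$ denotes the short-edge energy.

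Absorbing the term $\mathcal{E}^{\mathrm{short}}_t(f)$ is the main obstacle, since a priori it could fabricate spurious expansion. The resolution is that the short edges alone merely describe the doubling, hence non-expanding, space $Y$ at scale $1/t$: for a Lipschitz (or uniformly continuous) function $f$ each short edge oscillates by $O(1/t)$, so after the normalization by $|N_t|$ one gets $\mathcal{E}^{\mathrm{short}}_t(f)=O(t^{-2})\to 0$. Letting $t\to\infty$ therefore removes the short-edge term for such $f$, and a density argument in $L^2(Y;X)$ extends the bound $\|f\|^2\le C\sum_{s}\|\pi(s)f-f\|^2$ to all mean-zero $f$, which is exactly the spectral gap with $\kappa\gtrsim C^{-1}$. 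The recurring delicate point throughout is that every comparison constant must be kept independent of the scale $t$, and this is precisely what Ahlfors regularity together with the uniform bounded geometry of the $G_t$ supplies.
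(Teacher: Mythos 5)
Your proposal follows essentially the same route as the paper: the same net-based graph with orbit and short edges, the same extension/restriction dictionary between functions on the net and $L^2(Y,\mu;X)$, and the same strategy of establishing the gap on uniformly continuous functions and letting $t\to\infty$ to absorb the short-edge and discretization errors before concluding by density. The only step you leave implicit is that expansion is a quasi-isometry invariant of bounded-degree graph families (needed in the converse to pass from ``$(tY)$ is quasi-isometric to an expander'' to ``the graphs $G_t$ are themselves an expander''), which the paper isolates as a separate lemma.
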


In particular, this dynamical construction provides a large new class of expanders with respect to Banach spaces of non-trivial type. Such expanders (the strongest expanders presently known) have been previously obtained among quotients of groups --- see the seminal papers of V. Lafforgue \cites{Lafforgue1, Lafforgue2} and the generalisation of Liao \cite{Liao}.

\begin{cor}\label{cor} Let $\Gamma\acts (Y,d,\mu)$ be a measure-preserving ergodic action as in \cref{main}, and assume that $\Gamma$ has Lafforgue's Banach property (T).

Then, any family of bounded degree graphs quasi-isometric to $\cO_\Gamma Y$ is an expander with respect to all Banach spaces of non-trivial type.
\end{cor}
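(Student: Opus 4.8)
The plan is to derive the Corollary from Theorem \ref{main}, so it suffices to produce, for every Banach space $X$ of non-trivial type, a spectral gap for the induced $\Gamma$-representation on $L^2(Y,\mu;X)$. Granting this, Theorem \ref{main} gives that $(tY)_{t>0}\subseteq\cO_\Gamma Y$ is quasi-isometric to an expander with respect to $X$. Since being an expander with respect to a fixed Banach space is a quasi-isometry invariant of families of graphs, and the levels $(tY)_{t>0}$ are quasi-isometric to any bounded-degree family quasi-isometric to $\cO_\Gamma Y$, every such family is an expander with respect to $X$; ranging over all $X$ of non-trivial type then yields the statement.

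To produce the spectral gap I would first record that the finite-measure-preserving action induces an \emph{isometric} representation $\pi$ of $\Gamma$ on $E\defeq L^2(Y,\mu;X)$ by $(\pi(g)f)(y)=f(g^{-1}y)$: invariance of $\mu$ gives $\|\pi(g)f\|_E=\|f\|_E$ for all $g\in\Gamma$, and finiteness of $\mu$ guarantees that the constant $X$-valued functions lie in $E$. Next I would check that $E$ belongs to the class of spaces governed by reinforced Banach property (T): if $X$ has non-trivial type, say type $p\in(1,2]$, then the Bochner space $E=L^2(Y,\mu;X)$ has type $\min(p,2)=p>1$, by the standard stability of Rademacher type under the functor $L^2(\mu;-)$.

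Now I would invoke reinforced Banach property (T) in the sense of Lafforgue and Liao \cite{Lafforgue1, Lafforgue2, Liao}, which is stronger than ordinary Banach property (T): it supplies a sequence of averaging operators $\pi(m_n)$, with $m_n$ probability measures on $\Gamma$, converging in operator norm to the projection $P$ onto the invariant vectors $E^\pi$, uniformly over all representations on $E$ whose operator norms grow at most like $e^{\eps\,\ell(g)}$ for $\eps$ small. The isometric $\pi$ above trivially obeys this bound, so $\|\pi(m_n)-P\|\to 0$, and in particular the averages are strict contractions on $\ker P$ — exactly a spectral gap off the invariant vectors. Finally, ergodicity pins down $E^\pi$: an invariant $f$ satisfies $f(g^{-1}y)=f(y)$ for a.e.\ $y$ and every $g$, hence is a.e.\ constant, so $E^\pi$ is the subspace of constant $X$-valued functions and the gap is precisely the one demanded by Theorem \ref{main}.

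The hard part here is representation-theoretic rather than geometric: one must confirm that the class of Banach spaces admissible for reinforced property (T) is closed under $X\mapsto L^2(Y,\mu;X)$, and that the isometric induced representation genuinely falls within the scope of the (a priori stronger, growth-allowing) hypothesis so that the norm convergence $\|\pi(m_n)-P\|\to0$ applies verbatim. The quasi-isometry bookkeeping of the first paragraph — transferring expansion with respect to each fixed $X$ from the levels $tY$ to an arbitrary bounded-degree family quasi-isometric to the cone — I expect to be routine once quasi-isometry invariance of $X$-expansion is in hand.
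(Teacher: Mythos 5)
Your proposal follows the same route as the paper: establish that $L^2(Y,\mu;X)$ inherits non-trivial type from $X$, deduce the spectral gap from reinforced Banach property (T) together with ergodicity, and then conclude via Theorem \ref{main} and the quasi-isometry invariance of expansion (Lemma \ref{expansion-invariant}). The paper's proof is just a terser version of yours, delegating the Kazhdan-projection mechanism you spell out to the cited literature and quoting \cite{Diestel-Jarchow-Tonge}*{Theorem 11.12} for the type stability.
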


In order to address some recent developments \cites{Bou,BY,local} regarding local versions of the spectral gap condition, we verify \cref{main} not only for an action of a group generated by a finite set $S$ but also for a finite family $S$ of partially-defined bi-Lipschitz maps (\cref{main plus}).

The notion of expanders or \emph{non-linear spectral gaps} of \cite{Mendel-Naor} has been extensively studied also in the case when $X$ is a metric space (see, for instance, \cite{Mendel-Naor-Hadamard} and references therein). \cref{main} may also find applications in this context because it remains true under very mild assumptions about the space~$X$ (namely, density of $C(Y;X)$ in $L^2(Y,\mu;X)$), which are satisfied by large classes of metric spaces, notably CAT(0) (Hadamard) spaces considered in \cite{Mendel-Naor-Hadamard}.

\subsection*{Super-expanders}
\smallskip Roughly speaking, a sequence $(G_n)$ of graphs is an expander with respect to a Banach space $X$, if for every map $f\colon G_n\to X$ the norm of its global coboundary (i.e.\ the average of $\|f(v) - f(w)\|$ over $v,w\in G_n$) can be estimated by its local gradient (i.e.\ the average of $\|f(v) - f(w)\|$ over $\{v,w\}$ forming an edge). The former average involves a quadratic number of terms, while --- for sparse graphs --- the latter only a linear number, which is a significant computational advantage (cf.\ \cite{Mendel-Naor-Hadamard} and references therein).

The comparability of the global and local statistics implies in particular that a sequence of such maps $f_n\colon G_n\to X$ cannot preserve the metric structure of the graphs $G_n$, that is, it is never a bi-Lipschitz or coarse embedding (an observation originally due to Gromov). Yu proved the Novikov conjecture for groups admitting a coarse embedding into a Hilbert space \cite{A}, and Kasparov and Yu proved the coarse Novikov conjecture for bounded geometry metric spaces admitting a coarse embedding into a uniformly convex Banach space or, equivalently, into a super-reflexive Banach space \cite{Kasparov-Yu-super}. Hence, it was natural for Kasparov and Yu to ask whether there exist expanders with respect to all super-reflexive Banach spaces, or \emph{super-expanders} for short.

The positive answer was given by V. Lafforgue \cite{Lafforgue1}, who obtained super-ex\-panders as finite quotients of groups with his Banach property (T). Subsequently, he generalised these results from super-reflexive Banach spaces to an even larger class of Banach spaces, namely Banach spaces of non-trivial Rademacher type (or, equivalently, $K$-convex Banach spaces) \cite{Lafforgue2}. This is the largest class of Banach spaces for which expanders have been constructed.

Mendel and Naor \cite{Mendel-Naor} gave an independent construction of expanders with respect to super-reflexive Banach spaces, using the zig-zag product construction of \cite{zig-zag}.

The aim of this work is to provide another construction of super-ex\-panders, including expanders with respect to Banach spaces of non-trivial type.

\subsection*{Previous work} 
Warped cones were introduced by J.\ Roe \cite{Roe-cones} as a construction relating compact dynamical systems with large scale geometry. Dru\k{t}u and Nowak \cite{Drutu-Nowak} noticed that a particularly interesting case is when the system has a spectral gap, conjecturing that it may yield new counterexamples to the coarse Baum--Connes conjecture.

Following this direction, Nowak and the author \cite{Nowak-Sawicki} proved that for an action $\Gamma \acts Y$ with a spectral gap in $L^2(Y,\mu;X)$, the warped cone $\cO_\Gamma Y$ does not embed coarsely into $X$. If in addition the measure is Ahlfors regular and the action is by Lipschitz homeomorphisms, they showed that the $X$-distortion of levels $tY \in \cO_\Gamma Y$ is logarithmic.

Subsequently, assuming a condition equivalent to a spectral gap in $L^2(Y,\mu;\ell^2)$, Vigolo \cite{Vigolo} showed under similar hypotheses that the levels $tY$ are quasi-iso\-met\-ric to an expander (this implies the above result for $X$ being a Hilbert space) and that it is a sufficient and necessary condition.

\cref{main} combines advantages of the main results of \cite{Nowak-Sawicki} and \cite{Vigolo}: on the one hand, it applies to all Banach spaces, and on the other, it gives a characterisation of expansion rather than a criterion for distortion or non-embeddability.

The conjecture of Dru\k{t}u and Nowak is established in the forthcoming work \cite{Sawicki-cBCc} without requiring $Y$ to be geodesic --- a restriction important in \cite{Vigolo} and the present work --- yielding in particular counterexamples to the coarse Baum--Connes conjecture not coarsely equivalent to any family of graphs.

\subsection*{Further developments} Let us briefly report on some advancements in the area since the first version of this note was made public. The progress regards rigidity properties of warped cones, showing in particular that our construction is a very rich source of super-expanders and that expanders with respect to Banach spaces of non-trivial type provided by the construction are indeed provably distinct from (i.e.\ not quasi-isometric to) those previously known.

Jianchao Wu and the author \cite{SW} proved that for free actions the warped cone $\cO_\Gamma Y$ admits an \emph{asymptotically faithful} covering by products $\Gamma\times Y$, where the metric on $Y$ is appropriately scaled. By an argument of Khukhro and Valette \cite{KV}, for an $n$-manifold $Y$ it implies that a quasi-isometry $\cO_\Gamma Y\simeq (\Lambda/\Lambda_i)$ with a Margulis expander $(\Lambda/\Lambda_i)$ induces a quasi-isometry of groups
\begin{equation}\label{induced qi}
\Gamma\times \Z^n \simeq \Gamma\times\R^n \simeq \Lambda,
\end{equation}
see \cites{Sawicki-piecewise, dLV} for details. As noticed in \cite{dLV}, it follows from quasi-isometric-rigid\-ity literature that our super-expanders cannot be quasi-isometric to many super-expanders of Lafforgue and Liao.

David Fisher, Thang Nguyen, and Wouter van Limbeek \cite{FNvL} constructed a continuum of actions $\Gamma\acts Y$ yielding pairwise non-quasi-isometric warped cones satisfying the assumptions of \cref{main} for all Banach spaces of non-trivial type. This is a consequence of an impressive \emph{dynamical} quasi-isometric-rigidity result for warped cones that they obtain. An important tool employed are coarse fundamental groups, which were also independently studied in a very interesting work \cite{fundamental Vigolo} by Federico Vigolo.

It follows from computations in \cites{FNvL, fundamental Vigolo} that, for $Y$ with a finite fundamental group, the coarse fundamental group of $\cO_\Gamma Y$ is virtually isomorphic to $\Gamma$. If $\cO_\Gamma Y$ is quasi-isometric to a Margulis expander $(\Lambda/\Lambda_i)$, then combining \eqref{induced qi} with the induced virtual isomorphism $\Gamma \simeq \Lambda$ of coarse fundamental groups, one gets a quasi-isometry $\Gamma\simeq \Gamma\times \Z^n$, impossible for many $\Gamma$. This way Vigolo \cite{fundamental Vigolo} proves that there exist warped cones satisfying the assumptions of \cref{cor} that are not quasi-isometric to \emph{any} Margulis expander.

\subsection*{Methods and outline of the paper} Our treatment is functional-analytic in flavour and, like \cite{Nowak-Sawicki}, utilises Poincar\'e inequalities rather than isoperimetric inequalities, which are best-suited for classical (Hilbert-space) expanders. Nonetheless, it picks up some ideas from the measure-theoretic approach of \cite{Vigolo}, notably the key idea of studying families of graphs quasi-isometric to a warped cone, which allows the language of graph theory.

In \cref{sec:expanders} we introduce the definitions of an expander and an expander with respect to a Banach space and discuss methods of constructing them. \Cref{sec:gaps} defines a spectral gap via a Poincar\'e-type inequality, relates it to other definitions in the literature, and briefly reminds some examples.

\Cref{sec:proofs} recalls the definition of a warped cone and the quasi-isometric invariance of expansion (\cref{expansion-invariant}) and constructs certain graphs quasi-isometric to the warped cone. We give the core of the argument in \cref{subsec:Poincare}. The basic idea is to consider the above graphs --- whose vertex sets are finer and finer partitions of $Y$ --- and to reinterpret functions defined on them as functions defined on $Y$ or to approximate $f\in L^2(Y, \mu;X)$ by simple functions $f_n$ subordinate to these partitions. Then, we compare the norms of the coboundary of $f$ (i.e.\ $F(u,v)\defeq f(u)-f(v)$ for $(u,v)\in Y\times Y$) and the coboundary of $f_n$ with the norms of a certain `gradient' of $f$ with respect to the action and the graph-theoretic gradient of $f_n$. Finally, \cref{subsec:remarks} contains some remarks.

\section{Expanders}\label{sec:expanders}

Expanders are graphs that are simultaneously sparse and highly connected.

\begin{defi}\label{Cheeger} Let $G=(V,E)$ be a finite graph. Its \emph{Cheeger constant} is defined as
\[h(G) = \min \left\{ \frac{\myhash  E(A,V\setminus A)}{\min(\myhash  A, \myhash  (V\setminus A))} : A\subseteq V\text{ with }A\neq \emptyset,V \right\},\]
where $E(A, A')$ denotes the set of edges joining elements of subsets $A$ and $A'$ of $V$, and $\myhash  X$ is the cardinality of $X$.

An infinite family of finite graphs $(G_i)_{i\in I}$ is \emph{an expander} if their cardinalities are unbounded, vertex degrees uniformly bounded above, and Cheeger constants uniformly bounded away from $0$.
\end{defi}

Expanders are not all the same. One approach to compare `strength' of expanders is to consider constants measuring expansion (like the Cheeger constant~$h$ from \cref{Cheeger}), see \cites{Ramanujan, Margulis-Ramanujan}. Another approach is to rephrase the definition so that it is parametrised by a Banach space, and then measure the `strength' of an expander by the `size' of the class of Banach spaces with respect to which it is an expander.

\begin{defi}\label{super defi} Let $X$ be a Banach space and $(G_i)_{i\in I}$ be a family of finite graphs with uniformly bounded vertex degrees and unbounded cardinalities. We say that $(G_i)$ is \emph{an expander with respect to $X$} if there exists a constant $\eta>0$ such that
\begin{equation}\label{expansion inequality}
\sum_{x\sim y}\|f(x) - f(y)\|_X^2 \geq \frac{\eta}{\myhash G_i}\sum_{x,y\in G_i} \|f(x) - f(y)\|_X^2
\end{equation}
for any $i\in I$ and any function $f\colon G_i \to X$, where we sum over pairs of vertices of $G_i$, and $x\sim y$ means that there is an edge joining $x$ and $y$.
\end{defi}

For more on Banach-space expanders, consult \cites{Mendel-Naor,Lafforgue1,Lafforgue2,Cheng,Mimura}. By replacing norm distances $\|f(x)-f(y) \|_X$ by distances $d_X(f(x), f(y))$ with respect to some metric~$d_X$, we can generalise \cref{super defi} to the case where $(X,d_X)$ is a metric space. Note that, as long as $X$ contains at least two points, an expander with respect to $X$ is an expander in the sense of \cref{Cheeger} and --- in the other direction --- classical expansion implies expansion with respect to any Hilbert space. In this generality, Mendel and Naor proved that indeed the `strength' of expanders, measured by spaces $X$, may differ \cite{Mendel-Naor-Hadamard}.

Because of all the applications of expanders \cites{hash,HoLiWi,HLS,exp in pure and applied}, the problem of constructing them has attracted a great deal of attention \cites{zig-zag, HoLiWi}. The first explicit construction due to Margulis and many subsequent constructions used Cayley (or Schreier) graphs of groups, relying on group-theoretic arguments, like (relative) property (T) of Kazhdan \cite{Margulis} or additive combinatorics \cites{BG0}. In particular, the super-expanders of Lafforgue are constructed with the approach of \cite{Margulis}. There is also a purely combinatorial construction via `zig-zag graph products' of Reingold, Vadhan, and Wigderson \cite{zig-zag}, which was employed in the construction of super-expanders by Mendel and Naor.

Our construction follows another approach, which dates back to 1980s in some special cases \cite{Gabber-Galil}. More recently, it was applied by Bourgain and Yehudayoff, who defined a continuous expander as a family of smooth maps $\Psi$ from a subinterval of $Y=[0,1]$ to $Y$ such that for every measurable $A\subseteq Y$ with $\mu(A) \leq 1/2$, we have:
\begin{equation}\label{continuous expansion}
\mu\Big(\bigcup\nolimits_{\psi\in \Psi} \psi(A)\Big) \geq (1+c)\, \mu(A)
\end{equation}
for some positive $c$ not depending on $A$ (where $\mu(A)$ denotes the Lebesgue measure of~$A$) \cites{Bou, BY}. By `discretising' such a continuous expander, they obtained the first \emph{monotone} expander (see also \cite{local}).

For much more general metric measure spaces $Y$ equipped with a group action, Vigolo proved that the family of graphs resulting from a similar discretisation is an expander if \emph{and only if} $Y$ with a finite family $\Psi$ coming from the action is a continuous expander \cite{Vigolo} (the `only if' implication is the more difficult one).

The discretising procedure involves non-canonical choices. By using warped cones --- which corresponds to adding more edges to the discretisations, called \emph{lavish} approximating graphs by Vigolo \cite{Vigolo} --- one obtains graphs that are unique up to quasi-isometry because they are essentially $1$-skeleta of Rips complexes for a fixed family of metric spaces. As already mentioned in the introduction, this novel treatment allows applying the rapidly developing theory of warped cones to understand the geometry of such expanders.

The above results \cites{Gabber-Galil, Bou, BY, local, Vigolo} concerned only classical expanders (as in \cref{Cheeger}). In terms of the involved tools, these articles relate the isoperimetric inequality \eqref{continuous expansion} to the isoperimetric characterisation of expanders via Cheeger constants (\cref{Cheeger}). Relating Poincar\'e inequalities --- apart from being a new technique valid also in the classical setting --- allows obtaining expanders with respect to Banach spaces.

\section{Spectral gaps}\label{sec:gaps}

Our working definition of spectral gap, which will remove unnecessary technical difficulties from the proof of the main result, will be the following (in fact its slight generalisation, \cref{spectral gap ultimate plus}). The definition remains sound also for a metric space $X$, after replacing norm distances $\|f(u)-f(v)\|_X$ by metric distances $d_X(f(u),f(v))$, but, for the sake of exposition, we restrict ourselves to the case of Banach spaces.
\begin{defi}\label{spectral gap ultimate}
Let $X$ be a Banach space, $\Gamma\acts (Y,\mu)$ be a probability-measure-pre\-serving action, and $L^2(Y,\mu;X)$ be the space of Bochner-meas\-ur\-able square-integrable\footnote{For a metric space $(X,d_X)$, square-integrability of $f\colon Y\to X$ with $Y$ compact means that $f$ is at finite $L^2$-distance from any (equivalently: every) continuous function, where the distance is given by $d_{L^2}(f,g)^2 = \int_Y d_X(f(y),g(y))^2\dd y$.} $X$-valued functions. The action $\Gamma\acts Y$ \emph{has a spectral gap in $L^2(Y,\mu; X)$}, if there exists a constant $\kappa>0$ such that
\begin{equation}\label{inequality ultimate}
\sum_{s\in S} \int_Y \|f(u)-f(s^{-1}u)\|_X^2\dd u \geq \kappa \iint_{Y^2} \|f(u)-f(v)\|_X^2 \dd u\dd v,
\end{equation}
for any function $f\in L^2(Y,\mu;X)$.
\end{defi}

Note that a spectral gap for any non-zero Banach space $X$ implies a spectral gap with $X=\ell^2$, which is equivalent to `expansion in measure' of \cite{Vigolo}.

Let $L^2_0(Y,\mu;X) \subseteq L^2(Y,\mu;X)$ denote the subspace of functions $f$ such that $\int f \dd\mu = 0$. Observe that it suffices to check inequality \eqref{inequality ultimate} for $f\in L^2_0(Y,\mu; X)$ because adding a constant to $f$ does not affect the inequality. Another, more standard, characterisation of the spectral gap condition says that there are no almost invariant vectors in $L^2_0(Y,\mu; X)$ for the Koopman representation $\pi$; that is, there exists a constant $\kappa>0$ such that for every $f\in L^2_0(Y,\mu;X)$ we have
\begin{equation}\label{inequality gap for Banach}
\max_{s\in S} \| f - \pi_s f \| \geq \kappa \|f\|.
\end{equation}

Indeed, note that the square of the left-hand side of \eqref{inequality gap for Banach} is proportional to the left-hand side of \eqref{inequality ultimate} (with the proportionality constants depending on the cardinality of~$S$). Thus, in order to confirm that the above condition is equivalent to the one from \cref{spectral gap ultimate}, it suffices to check that for $f\in L^2_0(Y,\mu; X)$ the expression $\|f\|^2$ is comparable to the integral $\iint_{Y^2} \|f(u) - f(v)\|_X^2 \dd u\dd v.$

\begin{lem} Let $(Y,\mu)$ be a probability space, $X$ be any Banach space, $f\in L^2_0(Y,\mu;X)$, and $F\in L^2_0(Y\times Y,\mu\times \mu;X)$ be given by the formula $F(u,v) = f(u) - f(v)$. Then:
\[\tfrac{1}{2} \| f \| \leq \| F \| \leq 2 \| f \|.\]
\end{lem}
\begin{proof}
The latter inequality follows from the triangle inequality: for $F_1,F_2\in L^2(Y\times Y,\mu\times \mu;X)$ given by $F_1(u,v) = f(u)$, $F_2(u,v)= f(v)$, we have $\|F_1\|=\|F_2\| = \|f\|$ and $F=F_1-F_2$.
For the former inequality observe that the projection onto constants $P\colon L^2(Y,\mu;X)\mapsto X \subseteq L^2(Y,\mu;X)$ given by integration
\[\big(P(f)\big)(u) = \int_Y f(y) \dd y \]
is a contraction from $L^1$ and hence also a contraction from $L^2$, as our measure is probabilistic. Thus we know that the complement projection $P_0 = \operatorname{Id} - P$ onto $L^2_0(Y,\mu;X)$ has norm at most $2$. Consequently:
\begin{align*}
4\|F\|^2
&= 4\iint \|f(u) - f(v)\|_X^2 \dd u \dd v 
= 4 \int \|f-f(v)\|^2\dd v \\
&\geq \int \|P_0(f - f(v))\|^2\dd v
= \int \|f\|^2\dd v = \|f\|^2.\qedhere
\end{align*}
\end{proof}

The following gives a yet another characterisation.

\begin{fact}[Dru\k{t}u--Nowak \cite{Drutu-Nowak}]\label{fact-DN} Assume that $X$ is a uniformly convex Banach space and the finite generating set $S$ for $\Gamma$ contains the identity element. Then the action $\Gamma\acts Y$ has a spectral gap in $L^2(Y,\mu;X)$ if and only if we have
\begin{equation}\label{inequality spectral}
\bigg\| \frac{1}{\myhash  S} \sum_{s\in S} \pi_s \bigg \| < 1,
\end{equation}
where the above operator norm is considered in the space of bounded operators on $L^2_0(Y,\mu;X)$.
\end{fact}

Note that the above Markov operator $M=\frac{1}{{\scriptsize \myhash}  S} \sum_{s\in S} \pi_s$ --- when considered as an operator on the whole of $L^2(Y,\mu;X)$ --- preserves constant functions. Hence, if it satisfies \eqref{inequality spectral}, its spectrum intersects a certain neighbourhood of $1$ only at $1$, justifying the name `spectral gap' in \cref{spectral gap ultimate}. In fact, the `if' implication of \cref{fact-DN} does not require $X$ to be uniformly convex, so for arbitrary Banach spaces our definition of the spectral gap may be more general than condition \eqref{inequality spectral}.

\cref{spectral gap ultimate} can be slightly generalised. The following definition is intended as an intrinsic version of the \emph{local} spectral gap with respect to $Y\subseteq Z$ of an action $\Gamma\acts Z$ --- a condition introduced and studied in \cite{local}.

\begin{defi}\label{spectral gap ultimate plus}
Let $X$ be a Banach space, $(Y,\mu)$ a probability space, and $S$ a finite set such that every $s\in S$ is a measurable bijection $s\colon \dom s\to \im s$ between measurable subsets of $Y$, and assume that $S$ is closed under inverses.

The set $S$ \emph{has a spectral gap in $L^2(Y,\mu; X)$} if there exists a constant $\kappa>0$ such that
\begin{equation}\label{inequality ultimate plus}
\sum_{s\in S} \int_{\im s} \|f(u)-f(s^{-1}(u))\|_X^2\dd u \geq \kappa \iint_{Y^2} \|f(u)-f(v)\|_X^2 \dd u\dd v,
\end{equation}
for any function $f\in L^2(Y,\mu;X)$.
\end{defi}

The main result of \cite{local} is that actions of dense subgroups generated by `algebraic elements' on \emph{non}-compact simple Lie groups have a local spectral gap. Restrictions of such actions give examples of sets $S$ with a spectral gap in the sense of \cref{spectral gap ultimate plus}.

For $s\in S$, following the convention for group actions, we will later denote the image of $y\in \dom s$ under $s$ by $sy$. For $U\subseteq Y$ we will shortly write $s U$ to denote the image of $U\cap \dom s$ under $s$.

\subsection{Examples} There are many sources of actions with a spectral gap, to which \cref{main} can be applied. Since \cite{Nowak-Sawicki}*{Section 4} and \cite{Vigolo}*{Section 8} already provide a detailed discussion, our treatment here is concise.
\begin{itemize}[itemsep=2pt,leftmargin=8mm,label={--}]
\item Any ergodic action $\Gamma\acts(Y,\mu)$ of a property (T) group has a spectral gap.
\item Some elementary actions are known to have a spectral gap, e.g.\ $\mathrm{SL}_2(\Z)\acts\T^2$.
\item For dense subgroups generated by `algebraic elements' in compact simple Lie groups, the action by left translations on the ambient group has a spectral gap by celebrated results of Bourgain--Gamburd and Benoist--de Saxc\'e \cites{BG, BG2, BdS}.
\item The above ordinary spectal gap (that is, in $L^2(Y,\mu;\R)$) for $\Gamma\acts Y$  is equivalent to a spectral gap in $L^p(Y,\mu;L^p)$ for any $1<p<\infty$ (see, for example, \cite{Nowak-Sawicki}).
\item Any ergodic action $\Gamma\acts(Y,\mu)$  of a group with V.\ Lafforgue's Banach property (T), robust property (T), or property (T$^\text{proj}$) with respect to a class of Banach spaces $\mathcal{E}\owns L_2(Y,\mu;X)$ (this is typically equivalent to $\mathcal E\owns X$) has a spectral gap in $L^2(Y,\mu;X)$ \cites{dlS,dlSdL,Lafforgue1,Lafforgue2,Liao,Oppenheim1,Oppenheim2}.
\end{itemize}

\section{Proof of the main results}\label{sec:proofs}

The following is an adaptation of the definition of J. Roe \cite{Roe-cones}.

\begin{defi}\label{warped} Let $(Y,d)$ be a compact geodesic metric space, and let $S$ be a closed under inverses finite set of Lipschitz homeomorphisms between closed subsets of $Y$.

\emph{The warped cone $\cO_S Y$} is a collection of metric spaces $(\{t\}\times Y)_{t > 0}$ (denoted $(tY)_{t >0}$ for brevity), where each set $tY$ is equipped with the largest metric $d_S$ such that
\[d_S(tx,ty) \leq t d(x,y) \quad\text{ and }\quad d_S(ty,tsy)) \leq 1,\]
where in the latter condition we consider all $s\in S$ and $y\in \dom s$.
\end{defi}

Since replacing the metric on $Y$ by a Lipschitz equivalent metric (see \cref{defi:QI}) yields Lipschitz equivalent warped cones, the assumption that $Y$ is geodesic can be satisfied in most cases of interest. We will assume that $Y$ is not a singleton.

Typically, in \cref{warped} one assumes that $S$ consists of homeomorphisms $Y\to \nolinebreak Y$. Then, if $S'$ is another such set generating the same group~$\Gamma$ of homeomorphisms, the respective warped cones are Lipschitz equivalent \cite{Roe-cones}. It justifies the standard convention of denoting the warped cone by $\cO_\Gamma Y$ and its metrics by $d_\Gamma$, which we conformed to in \cref{main} and \cref{cor}. With notation explained, it is clear that \cref{main} is a special case of the following result.

\begin{theorem}\label{main plus}
Let $(Y,d)$ and $S$ be as in \cref{warped}, and assume that $(Y,d,\mu)$ is an Ahlfors regular metric measure space and that $\mu(\partial \dom s)=0$ for all $s\in S$. Let $X$ be any Banach space.

Then, $S$ has a spectral gap in $L^2(Y,\mu;X)$ if and only if the family $(tY,d_S)_{t >0} = \cO_S Y$ is quasi-isometric to an expander with respect to $X$.
\end{theorem}

The assumption $\mu(\partial \dom s)=0$ on the measure of boundaries will only be relevant for the `if' implication.

\subsection{Quasi-isometries} Let us fix some terminology and notation.

\begin{defi}\label{defi:QI} A map $f\colon (Y,d_Y)\to (Z,d_Z)$ between metric spaces is \emph{a quasi-isometry} if there exist constants $A\geq 0$ and $C\geq 1$ such that
\begin{equation}\label{qi ineq}
C^{-1} d_Y(x,y) - A \leq d_Z(f(x), f(y)) \leq C d_Y(x,y) + A
\end{equation}
for all $x,y\in Y$ and such that the $B$-neighbourhood of $\im f$ is the whole of $Z$ for some $B>0$.

A bijective map $f\colon (Y,d_Y)\to (Z,d_Z)$ is \emph{a Lipschitz equivalence} (or simply \emph{bi-Lipschitz}) if one can take $A=0$ in \eqref{qi ineq}.
\end{defi}

\cref{defi:QI} applies respectively to families of maps $(f_t\colon Y_t\to Z_t)_{t\in T}$, where one requires the constants to be universal. Our index set $T$ will typically be $(0,\infty)$.

The following result is well-known among experts, at least for Hilbert-space expanders. We include the proof for the reader's convenience.

\begin{lem}\label{expansion-invariant} Expansion with respect to $X$ is a quasi-isometry invariant of families of graphs of uniformly bounded degree.
\end{lem}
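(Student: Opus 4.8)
The plan is to show that a quasi-isometry $f\colon G\to H$ between bounded-degree graphs transports the expansion inequality back and forth, with constants depending only on the quasi-isometry constants $A,C$ and the degree bounds. Expansion with respect to $X$, as defined in the excerpt, is a Poincaré-type inequality comparing the ``local'' sum $\sum_{x\sim y}\|f(x)-f(y)\|^2$ over edges to the ``global'' sum $\frac{1}{\#G}\sum_{x,y}\|f(x)-f(y)\|^2$ over all pairs. So I need to understand how each of these two sums behaves under a quasi-isometry. Throughout I would fix the expander constant $\eta$ for the source family and aim to produce a constant $\eta'$ for the target family.

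The heart of the argument is a coarse comparison of the two graph metrics. First I would record the standard consequence that a quasi-isometry $f\colon G\to H$ between bounded-degree graphs has uniformly bounded preimages: since $f$ is a coarse embedding, points mapped close together in $H$ were already within a bounded distance $R=R(A,C)$ in $G$, and a ball of radius $R$ in a graph of degree $\le D$ contains at most $D^{R}$ vertices, so $\#f^{-1}(h)\le N$ for a universal $N$. Together with the $A$-density condition $\bigcup_x B(f(x),A)=H$, this shows $\#G$ and $\#H$ are comparable up to a multiplicative constant. For the global sum I would transport a function $\varphi\colon H\to X$ to $\varphi\circ f\colon G\to X$ (and conversely use a quasi-inverse $g\colon H\to G$, which exists with comparable constants); comparing $\sum_{x,y\in G}\|\varphi f(x)-\varphi f(y)\|^2$ to $\sum_{h,h'\in H}\|\varphi(h)-\varphi(h')\|^2$ is then bookkeeping using the bounded multiplicity of $f$ and the comparability of cardinalities.

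The main obstacle, as usual in such arguments, is the edge sum, because an edge of $H$ need not be the image of an edge of $G$: the quasi-isometry only guarantees that adjacent vertices of $H$ pull back to vertices of $G$ at distance $\le C(1+A)+2CA\eqdef R'$. The plan is to bound $\|\varphi(h)-\varphi(h')\|$ across an edge $h\sim h'$ of $H$ by a telescoping sum along a path $x_0,\dots,x_k$ in $G$ of length $k\le R'$ joining chosen preimages, using the elementary inequality $\|\sum a_i\|^2\le k\sum\|a_i\|^2$, so that $\|\varphi(h)-\varphi(h')\|^2\lesssim_{R'}\sum_{j}\|\psi(x_{j-1})-\psi(x_j)\|^2$ for $\psi=\varphi\circ g$ extended appropriately. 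Summing over all edges of $H$, each edge $\{x_{j-1},x_j\}$ of $G$ gets counted a bounded number of times (again by bounded degree and bounded multiplicity, controlling how many $H$-edges route through a fixed $G$-edge), yielding $\sum_{h\sim h'}\|\varphi(h)-\varphi(h')\|^2\lesssim\sum_{x\sim x'}\|\psi(x)-\psi(x')\|^2$. Chaining these comparisons with the source expansion inequality $\sum_{x\sim x'}\|\psi(x)-\psi(x')\|^2\ge\frac{\eta}{\#G}\sum_{x,x'}\|\psi(x)-\psi(x')\|^2$ and the global comparison from the previous paragraph produces the target inequality with an explicit $\eta'>0$. Since the roles of $G$ and $H$ are symmetric via the quasi-inverse, this establishes the invariance in both directions.
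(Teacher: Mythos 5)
Your overall architecture (bounded fibres, comparable cardinalities, telescoping along short paths with bounded multiplicity, then chaining with the source Poincar\'e inequality) is the right one and matches the paper's, but your key edge-sum comparison is oriented backwards, and with it the chain of inequalities does not close. To deduce the expansion inequality for $\varphi\colon H\to X$ from that of $G$, you must bound the edge sum of the pullback \emph{from above} by the edge sum of $\varphi$, i.e.\ prove $\sum_{x\sim x'}\|\psi(x)-\psi(x')\|^2\lesssim\sum_{h\sim h'}\|\varphi(h)-\varphi(h')\|^2$; this is done by pushing a $G$-edge $x\sim x'$ forward to a path in $H$ of length at most $C+A$ joining $f(x)$ to $f(x')$ and telescoping there, each $H$-edge being used boundedly often. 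You instead pull an $H$-edge back to a $G$-path and obtain $\sum_{h\sim h'}\|\varphi\|^2\lesssim\sum_{x\sim x'}\|\psi\|^2$. That is an \emph{upper} bound on the very quantity the expansion inequality asks you to bound from below; combined with the source inequality $\sum_{x\sim x'}\|\psi\|^2\ge\frac{\eta}{\#G}\sum_{x,x'}\|\psi\|^2$ it yields nothing, since the two inequalities point the same way and cannot be concatenated. Your stated obstacle --- ``an edge of $H$ need not be the image of an edge of $G$'' --- is the mirror image of the actual one, namely that the image of an edge of $G$ need not be an edge of $H$.

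A secondary gap sits in the ``global comparison'', which you dismiss as bookkeeping. Since the image of $f$ is only $A$-dense, $\varphi(h)$ must be replaced by $\psi(x_h)=\varphi(f(x_h))$ for some nearby $x_h$, and the resulting error $\sum_h\|\varphi(h)-\varphi(f(x_h))\|^2$ is not controlled by multiplicities or cardinalities alone: it must be absorbed into the $H$-edge sum by a second telescoping, along $H$-geodesics of length at most the density constant, and then moved to the other side of the inequality; this is exactly the term $2\#H_t\sum_h\|f(h)-f(g_h)\|$ handled via $K_B^2$ in the paper's proof. (Also, $\psi=\varphi\circ g$ with $g\colon H\to G$ and $\varphi\colon H\to X$ does not typecheck; you mean $\psi=\varphi\circ f$.) With the edge comparison reversed and this error term treated, your argument becomes the paper's.
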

\begin{proof}
Let $G=(G_t)$ and $H=(H_t)$ be two families of finite graphs with degrees bounded by $D<\infty$ and $i=(i_t\colon G_t\to H_t)$ be a quasi-isometry. That is, there exist positive constants $A$, $B$, $C$ such that
\[C^{-1}d(g,g')-A \leq  d(i(g),i(g'))\leq Cd(g,g') + A\]
and for every $h\in H_t$ there exists $g_h\in G_t$ with $d(h, i(g_h))\leq B$. Let us denote by $K_\text{f}$ the maximal size of a fibre of $i$, which is bounded by the maximal size of a (closed) $CA$-ball in $G$, by $K_B$ the maximal size of a $B$-ball in $H$, and by $K_{C+A}$ the maximal size of a $(C+A)$-ball in $H$. All three can be bounded in terms of $D$.

We will show that if $G$ is an expander with respect to $X$, then $H$ is too.

Let $f\colon H_t\to X$. We have:
\begin{align}
\sum_{h,k\in H_t} &\|f(h) - f(k)\|^2\notag\\
&\leq 3 \sum_{h,k\in H_t} \Big( \|f(h)-f(i(g_h))\|^2 + \|f(i(g_h))-f(i(g_k))\|^2 \notag\\
&\phantom{\leq}\; + \|f(i(g_k)) - f(k) \|^2 \Big) \notag\\ 
&\leq 3\sum_{h,k\in H_t} \|f(i(g_h))-f(i(g_k))\|^2 + 6\myhash  H_t \sum_{h\in H_t} \|f(h)-f(i(g_h))\|^2. \label{nr}
\end{align}
If one denotes a geodesic path from $h$ to $i(g_h)$ by $(h_l)_{l=0}^n$ (where $n\leq B$ depends on~$h$), then the second sum $\sum_{h\in H_t} \|f(h)-f(i(g_h))\|^2$ in \eqref{nr} is bounded by:
\begin{align*}
\sum_{h\in H_t} n \sum_{l=1}^n \| f(h_{l-1}) - f(h_l) \|^2 
\leq B K_B \sum_{h\sim k\in H_t} \|f(h)-f(k)\|^2.
\end{align*}
On the other hand, if we denote the composition $f\circ i$ by $f'$, then, by expansion of graphs $G_t$, the first sum in \eqref{nr} can be bounded above as follows:
\begin{align*}
\frac{\eta}{K_B^2}\sum_{h,k\in H_t} \|f(i(g_h))-f(i(g_k))\|^2
&\leq \eta \sum_{g,j\in G_t} \|f'(g)-f'(j)\|^2 \\
&\leq \myhash G_t \sum_{g\sim j} \|f'(g)-f'(j)\|^2,
\end{align*}
and --- after denoting a geodesic path from $i(g)$ to $i(j)$ by $(p^{g,j}_l)_{l=0}^m$ with $m\leq C+A$ --- we get:
\begin{align*}
\sum_{g\sim j} \|f'(g)-f'(j)\|^2
&\leq \sum_{g\sim j} m\sum_{l=1}^m \| f(p^{g,j}_{l-1}) - f(p^{g,j}_l) \|^2 \\
&\leq (C+A)K_\text{f}^2 K_{C+A}^2 \sum_{h\sim k} \|f(h)-f(k)\|^2.
\end{align*}
Summarising, we have just proved the lemma:
\begin{align*}
&\sum_{h,k\in H_t} \|f(h) - f(k)\|^2 \\
&\phantom{+}\leq \left(\frac{3(C+A)K_\text{f}^2 K_B^2 K_{C+A}^2 \myhash  G_t}{\eta} + 6\myhash  H_t B K_B\right) \sum_{h\sim k} \|f(h)-f(k)\|^2 \\
&\phantom{+}\leq \left(\frac{3(C+A)K_\text{f}^3 K_B^2 K_{C+A}^2}{\eta} + 6 B K_B\right) \myhash  H_t \sum_{h\sim k} \|f(h)-f(k)\|^2.\qedhere
\end{align*}
\end{proof}

\subsection{Construction of graphs}\label{construction}

Let $B(y,r)$ denote the open ball about $y$ of radius~$r$.

\begin{defi}\label{Ahlfors} A metric measure space $(Y,d,\mu)$ is \emph{Ahlfors regular} if there exist constants $c,m>0$ and $C\geq 1$ such that for any $r\leq 2\diam Y$ and $y\in Y$ we have
\[cr^m \leq \mu\big(B(y,r)\big) \leq Cc r^m.\]
\end{defi}

Let $(Y,d,\mu)$ and $S$ be as in \cref{main plus}.
Given $t>0$, pick a maximal $1/t$-separated set $Z = Z(t)$ in $Y$ and a respective measurable partition $\{U_z\}_{z\in Z}$ such that $B(z,1/(2t))\subseteq U_z\subseteq B(z,1/t)$. In particular, $\diam U_z \leq 2/t$ and
\[\frac{1}{K \myhash Z} \leq \mu(U_z)\leq \frac{K}{\myhash Z},\]
where $K=2^mC$ for $C$ and $m$ coming from the definition of Ahlfors regularity. Construct a graph $G=G(t)$ with the vertex set $Z$ and edges of two types, that is, there is an edge $y\sim z$ if $y\sim_1 z$ or $y\sim_2 z$, where the latter are defined by:
\begin{align*}
y\sim_1 z \iff &d(y,z)\leq 3/t,\\
y\sim_s z \iff &s U_y \cap U_z \neq \emptyset, \quad\text{where }s\in S,\\
y\sim_2 z \iff &\exists{s\in S}: y \sim_s z.
\end{align*}

The following lemma is a version of the fact that a Rips complex is quasi-isometric to the ambient space. Note that we could as well define $G$ as the 1-skeleton of the Rips complex for $Z$ with respect to the metric $d_S$, but the above definition, differentiating between edges of type (1) and (2), is more convenient in our proof. The `moreover' part of the lemma is another version of \cite{Roe-cones}*{Proposition 1.10} and \cite{Vigolo}*{Proposition~4.1}.
\begin{lem}\label{Rips-QI} Under the hypotheses of \cref{main plus}
the inclusions $Z(t) \subseteq Y$ induce bi-Lipschitz embeddings $G(t) \subseteq (tY,d_S)$ with the Lipschitz constants uniform in~$t$. In particular, the families $(G(t))_{t > 0}$ and $(tY)_{t > 0} = \cO_S Y$ are quasi-isometric. Moreover, the graphs $G(t)$ have vertex degrees bounded by some constant $D<\infty$ independent of~$t$.
\end{lem}
\begin{proof}
Fix $t>0$ and let $y\neq z$ be vertices of $G(t)$. Let $n\in \N$ be the minimal number such that $1\leq d_S(ty,tz) \leq n$ and observe that there exists a sequence of points $y=x_0,\ldots,x_{2n}=z\in Y$ with $td(x_{2i},x_{2i+1})\leq 1$ and $s x_{2i+1}= x_{2i+2}$ for some $s\in S\cup\{\id_Y\}$ (cf.\ \cite{Roe-cones}*{Proposition 1.6}). Denote by $z_i$ the unique element of $Z$ with $x_i\in U_{z_i}$ and observe that for every $i=1,\ldots,2n$ the pair $\{z_{i-1},z_i\}$ forms an edge. Hence, the distance $d_G(y,z)$ is bounded by $2n \leq 4 d_S(ty,tz).$
For the opposite bound it suffices to look at edges: if $y\sim_1 z$, then clearly $d_S(ty,tz) \leq td(y,z) \leq 3$; and if $y\sim_s z$, that is, there exists $x\in sU_y\cap U_z$, then also
\[d_S(ty,tz)\leq d_S(ty, s^{-1}tx) + d_S(s^{-1}tx, tx) + d_S(tx, tz) \leq 3,\]
which proves that the map $G(t)\to tY$ is a bi-Lipschitz embedding with constant $4\cdot 3 = 12$.

For the `moreover' part, observe that the number of edges of type (1) emanating from $y\in Z$ is bounded by the quotient $\frac{\mu(B(y,4/t))}{\inf_{z} \mu (B(z,1/(2t)))} \leq 8^m C$ (for constants $C,m$ from \cref{Ahlfors}) because the disjoint union of $U_{z}$ (each of measure at least $\mu\big(B(z,1/(2t))\big)$) over $d(y,z)\leq 3/t$  is contained in the ball $B(y,4/t)$. Similarly, for $s\in S$ and $y\in Z$ the number of edges $y\sim_s z$ is bounded by
\[\frac{\mu\big(B(sU_y,2/t)\big)}{\inf_{z} \mu(U_{z})} \leq \frac{\sup_z \mu\big(B(z,(2L+2)/t)\big)}{\inf_z \mu \big(B(z,1/(2t))\big)} \leq (4L+4)^m C,\]
for $L$ being the Lipschitz constant for the action of generators.
\end{proof}

\subsection{Poincar\'e inequalities}\label{subsec:Poincare}

After this preparation we are ready to prove the implications relating \eqref{expansion inequality} with \eqref{inequality ultimate plus}, that is, the implications relating Poincar\'e inequalities for maps $G(t)\to X$ with those for maps $Y\to X$. This, together with the results of the previous section, completes the proof of \cref{main plus}.

\begin{proof}[Expansion $\implies$ spectral gap] Assume that $\cO_S Y$ is quasi-isometric to an expander with respect to $X$. Since by \cref{Rips-QI} the family $(tY)_{t>0} = \cO_S Y$ is quasi-isometric to graphs $(G(t))$ having bounded degrees, and by \cref{expansion-invariant} being an expander is a quasi-isometry invariant among bounded degree graphs, we know that the family $(G(t))$ is an expander.

It suffices to check the spectral gap condition on the dense subset $C(Y;X)\allowbreak\subseteq L^2(Y,\mu;X)$ of continuous functions. Pick $f\in C(Y;X)$ and assume $0<F^2 \defeq \iint_{Y^2} \|f(u)-f(v)\|^2 \dd u\dd v$ (if $F=0$, then $f$ is constant and the inequality is trivially satisfied). For $t>0$ and the respective $Z$ and $\{U_z\}_{z\in Z}$ used to define $G(t)$ in \cref{construction}, we define $g\in L^2(Y;X)$ to be constant on every $U_z$ with the value given by $f(z)$. One can require $\eps_1=\eps_1(t) = \|f-g\|_\infty$ to be at most $F/2$ for sufficiently large $t$. We have:
\begin{align*}
(F - 2\eps_1)^2
&\leq \iint_{Y^2} \|g(u)-g(v)\|^2 \dd u\dd v\\
&= \sum_{y,z\in Z} \iint_{U_y\times U_z} \|g(u)-g(v)\|^2 \dd u\dd v \\
&\lesssim_{K^2} \sum_{y,z\in Z} \frac{1}{\myhash Z^2}  \|g(y)-g(z)\|^2
\end{align*}
(where $a\lesssim_c b$ means that $a\leq c b$), which, by expansion, is bounded (up to the multiplicative constant $\eta$ from \cref{super defi}) by the following:
\begin{multline*} 
 \frac{1}{\myhash Z} \sum_{y\sim z} \|g(y)-g(z)\|^2
 \leq \frac{1}{\myhash Z} \sum_{z} \left(\sum_{y\sim_1 z} \|f(y)-f(z)\|^2 
 + \sum_{y\sim_2 z}\|f(y)-f(z)\|^2 \right).
\end{multline*}
Note that by the uniform continuity of $f$ we know that the value of
\[\sup_{y\sim_1 z}\|f(y)-f(z)\| \leq \sup_{d(y,y')\leq 3/t} \|f(y)-f(y')\| \eqdef \eps_2(t)\]
goes to zero when $t$ goes to infinity, and, as the value of the first sum is bounded by $D\eps_2^2$, we can focus on the second sum.

We obtain:
\begin{align}
\frac{(F-2\eps_1)^2\eta}{K^2}-D\eps_2^2
& \leq \frac{1}{\myhash Z} \sum_z \sum_{s\in S} \sum_{y\sim_s z} \|f(y)-f(z)\|^2 \label{a} \\
&\lesssim_K \sum_{s\in S} \sum_z \sum_{y\sim_s z} \int_{U_z} \|f(y)-f(z)\|^2 \dd u.\label{b}
\end{align}
Fix $s\in S$ for a moment. Consider the open $1/t$-neighbourhood $B_s$ of the boundary $\partial \im s \subseteq Y$. Since $Y$ is a geodesic space, the complement of $B_s$ splits into the `$1/t$-interior' $I_s$ of $\im s$ and the `$1/t$-complement' $C_s$ of $\im s$:
\begin{align*}
I_s &= \{ z\in Y : B(z,1/t)\subseteq \im s \},\\
C_s &= \{ z\in Y : B(z,1/t)\subseteq Y \setminus \im s \}.
\end{align*}
For $z\in C_s$ there is no edge $y \sim_s z$, and for $z\in B_s \cap Z$ the set $U_z$ is contained in the open $2/t$-neighbourhood of $\partial \im s = \partial \dom s^{-1}$, which has measure $\eps_{3,s}(t)$ converging to zero by the assumption that $\mu(\partial \dom s^{-1})=0$.

Hence we can focus on vertices $z$ from $I_s$. In this case $U_z\subseteq \im s$ and $s^{-1}$ is defined on the whole of $U_z$, so --- for $\eps_3 \defeq D \|2f\|_\infty^2 \sum_s \eps_{3,s}$ --- we have:
\begin{align*}
\frac{(F-2\eps_1)^2\eta}{K^3}&-\frac{D\eps_2^2}{K} - \eps_3 \\
&\leq \sum_{s\in S} \sum_{z\in I_s} \sum_{y\sim_s z} \int_{U_z} \|f(y)-f(z)\|^2 \dd u \\
&\leq \sum_{s\in S} \sum_{z\in I_s} \sum_{y\sim_s z} \int_{U_z} \left(\eps_4 + \|f(s^{-1}u)-f(u)\| + \eps_1\right)^2 \dd u\displaybreak[0] \\
&\lesssim_{2D} \sum_{s\in S} \sum_{z\in I_s} \int_{U_z} \|f(s^{-1}u)-f(u)\|^2 + (\eps_1 + \eps_4)^2 \dd u,
\end{align*}
where $\eps_4(t)$ is defined as 
\[\eps_4(t) \defeq \sup_{d(y,y')\leq \frac{2L+1}{t}}\|f(y)-f(y')\| \]
and goes to $0$ as $t\to \infty$.

We have just obtained:
\begin{multline*}
\frac{(F-2\eps_1)^2\eta}{2K^3D}-\frac{\eps_2^2}{2K} - \frac{\eps_3}{2D} - \myhash S(\eps_1+\eps_4)^2  
\leq \sum_{s\in S} \int_{sY} \|f(s^{-1}u)-f(u)\|^2 \dd u,
\end{multline*}
which, after passing to infinity with $t$, yields the spectral gap:
\[\frac{\eta}{2K^3D}  \iint_{Y^2} \|f(u)-f(v)\|^2 \dd u\dd v \leq \sum_{s\in S}\int_{sY} \|f(s^{-1}u)-f(u)\|^2 \dd u.\qedhere\]
\end{proof}

\begin{proof}[Spectral gap $\implies$ expansion] Assume that $S$ has a spectral gap in $L^2(Y,\mu;X)$. In \cref{Rips-QI} we proved that, under the assumptions of \cref{main plus}, the warped cone $\cO_S Y$ is quasi-isometric to a family of graphs $(G(t))$ with uniformly bounded vertex degrees (and also unbounded cardinalities because of unboundedness of diameters). Hence, in order to complete the proof of \cref{main plus}, it suffices to prove that these graphs satisfy the Poincar\'e inequality~\eqref{expansion inequality} from \cref{super defi}.

Fix $t>0$ and the respective $Z$, $\{U_z\}_{z\in Z}$, and $G$ from \cref{construction}. For any $f\in L^2(G;X)$ we can bound from below:
\begin{align}
\myhash S \sum_{y\sim z}\|f(y) - f(z)\|^2 
&\geq \sum_{s\in S} \sum_z \sum_{y\sim_s z}\|f(y) - f(z)\|^2 \label{c} \\
&\geq \sum_{s\in S} \sum_z \sum_{y\sim_s z}\frac{\mu(s U_y\cap U_z)}{\mu(U_z)} \|f(y) - f(z)\|^2.\notag
\end{align}
If one extends $f$ to a function constant on every $U_z$, the last expression equals:
\begin{align}
\sum_{s\in S} \sum_z \frac{1}{\mu(U_z)} \sum_{y\sim_s z} &\int_{s U_y\cap U_z} \|f(s^{-1}u) - f(u)\|^2\dd u \label{d}\\
&\geq \sum_{s\in S} \sum_z \frac{\myhash Z}{K} \sum_{y\sim_s z} \int_{s U_y\cap U_z}\|f(s^{-1}u) - f(u)\|^2\dd u \notag \\
&= \frac{\myhash Z}{K} 
           \sum_{s\in S} \int_{sY\cap Y} \|f(s^{-1}u) - f(u)\|^2\dd u,\notag
\end{align}
and this, by the spectral gap property, can be bounded below as follows:
\begin{align*}
\frac{\myhash Z}{K} 
           \sum_{s\in S} \int_{sY} \|f(s^{-1}u) - f(u)\|^2\dd u
&\geq \frac{\kappa \myhash Z}{K} \iint_{Y^2} \| f(u) - f(v)\|^2 \dd u\dd v\\
&= \frac{\kappa \myhash Z}{K} \sum_{y,z\in Z} \iint_{U_y\times U_z} \| f(u) - f(v)\|^2 \dd u\dd v\\
&\geq \frac{\kappa \myhash Z}{K} \sum_{y,z\in Z} \left(\frac{1}{K \myhash Z}\right)^2 \|f(y)-f(z)\|^2.
\end{align*}
So for $\eta = \frac{\kappa}{K^3 {\scriptsize \myhash} S}$ we have just proved that
\[ \sum_{y\sim z}\|f(y) - f(z)\|^2  \geq \frac{\eta}{\myhash Z} \sum_{y,z\in Z} \|f(y)-f(z)\|^2.\qedhere\]
\end{proof}

We will now deduce \cref{cor}.

\begin{proof}[Proof of \cref{cor}] Let $(Y,d,\mu)$ be an Ahlfors regular metric measure space with an ergodic action by Lipschitz homeomorphisms of a finitely generated group~$\Gamma$ with V. Lafforgue's Banach property (T) as in \cites{Lafforgue2, Liao}. Let $(G(t))_{t>0}$ be a family of graphs with uniformly bounded vertex degrees and quasi-isometric to $\cO_\Gamma Y$, for instance the family from \cref{construction}. For any Banach space $X$ of non-trivial type, also the Bochner space $L^2(Y,\mu; X)$ has non-trivial type, equal to the type of~$X$ (see e.g.\ \cite{Diestel-Jarchow-Tonge}*{Theorem 11.12}). Consequently, the action has a spectral gap in $L^2(Y,\mu;X)$. Now, by \cref{main} (formally: together with \cref{expansion-invariant}) the family $(G(t))_{t>0}$ is an expander with respect to~$X$.
\end{proof}

\subsection{Remarks}\label{subsec:remarks} Let us finish with some remarks on the proof and a corollary.

\vspace{1mm}\noindent1. In the proof that the spectral gap for an action implies the expansion of the graphs $G(t)$, the Lipschitz condition was only used in \cref{Rips-QI} to obtain a bound on the degrees of the graphs $G(t)$, but not in the proof of the Poincar\'e inequality~\eqref{expansion inequality}. Hence, even without this assumption one still gets a `weak expander', that is, an expander with potentially unbounded degree of vertices, cf.\ \cite{expgirth1}.

\vspace{1mm}\noindent2. However, the boundedness of the degree of vertices in the definition of an expander was used crucially for the opposite implication, namely that expansion implies spectral gap.

Indeed, even for $z\in I_s$ one cannot bound $\frac{1}{{\scriptsize \myhash} Z}\|f(y)-f(z)\|^2$ in line \eqref{a} from above by the integral $\int_{sU_y\cap U_z} \|f(y)-f(z)\|^2\dd u$ (like we did for the opposite bound in lines \eqref{c} to \eqref{d}) because $\mu(sU_y\cap U_z)$ can be arbitrarily small. Consequently, in line \eqref{b} for every $y$ (such that $y\sim_s z$) we integrate the same function over the whole of $U_z$, so we later need boundedness of $D$ in order to control the number of these integrals.

Formally, we also used the Lipschitz continuity of $s\in S$ to show the vanishing of $\eps_4$, but uniform continuity would suffice.

\vspace{1mm}\noindent3. In the proof that the expansion of $G(t)$ implies a spectral gap for the action, it suffices to assume that $(G(t))_{t\in T}$ is an expander for some unbounded subset $T\subseteq (0,\infty)$.

\vspace{1mm}\noindent4. We also did not rely essentially on the exponent $p=2$, that is, the equivalence of the $L^p$-analogues of \eqref{expansion inequality} and \eqref{inequality ultimate plus} holds under the assumptions of \cref{main plus} for any exponent $p\in[1,\infty)$. (For $p=\infty$ the analogues of \eqref{expansion inequality} and \eqref{inequality ultimate plus} never hold unless $X$ is a point.)

Being an expander with respect to $X$ does not depend on $p$ by the result of Cheng \cite{Cheng}, generalising partial results of Mimura \cite{Mimura}, so we reach the following conclusion.
\begin{cor}\label{cor in rems} Under the hypotheses of \cref{main plus}, the spectral gap condition in $L^p(Y,\mu;X)$ does not depend on $p\in [1,\infty)$.
\end{cor}

\vspace{1mm}\noindent5. In \cref{warped} we require $Y$ to be geodesic in order to guarantee that the family $(Y,td)_{t>0}$ and hence also the family $(tY,d_S)_{t>0}$ are quasi-geodesic. Since \cref{main plus} compares the family $(tY,d_S)_{t>0}$ to graphs and being quasi-isometric to a graph is equivalent to being quasi-geodesic, the assumption is rather unavoidable. See \cite{Sawicki-cBCc} for warped cones (over spectral gap actions on non-geodesic spaces) being coarsely non-equivalent to any family of graphs.

\vspace{1mm}\noindent6. Nonetheless, the equivalence of the spectral gap for the action and the expansion for graphs $G(t)$ still holds, irrespective of whether $Y$ is geodesic (the same remark applies to \cref{cor in rems}). It is also true for graphs $G'(t)$ obtained by allowing only edges of type $(2)$ of graphs $G(t)$. Similar graphs were introduced in \cite{Vigolo} under the name of approximating graphs for the action, and the respective equivalence was obtained in the classical setting.

In particular, even without the geodesic assumption, expansion with respect to Banach spaces of non-trivial type holds for graphs $G(t)$ and $G'(t)$ and actions as in \cref{cor}.

\section*{Acknowledgements} The author is grateful to Alan Czuro\'n and Masato Mimura for inspiring conversations on Banach-space expanders and to Pawe\l{} J\'oziak and Mateusz Wasilewski for discussions on Bochner spaces. The author wishes to express his gratitude to Piotr Nowak for his constant support and guidance. The author is indebted to Pawe\l{} J\'oziak, Piotr Nowak, and Federico Vigolo for remarks on the manuscript and to the anonymous referee for their careful reading and helpful comments. The author thanks Noémie Combe for help in preparing French versions of the abstract and keyword list.

The author was partially supported by the Narodowe Centrum Nauki grant Preludium number 2015/19/N/ST1/03606 and by the Max-Planck-Gesellschaft.

\begin{bibsection}
\begin{biblist}
\bib{BdS}{article}{
	author={Benoist, Y.},
	author={de Saxc\'e, N.},
	title={A spectral gap theorem in simple Lie groups},
    journal={Invent. Math.},
    volume={205},
    date={2016},
    number={2},
    pages={337--361},
    issn={0020-9910},
    doi={10.1007/s00222-015-0636-2},
}

\bib{Bou}{article}{
   author={Bourgain, Jean},
   title={Expanders and dimensional expansion},
   language={English, with English and French summaries},
   journal={C. R. Math. Acad. Sci. Paris},
   volume={347},
   date={2009},
   number={7-8},
   pages={357--362},
   issn={1631-073X},
   doi={10.1016/j.crma.2009.02.009},
}

\bib{BG}{article}{
   author={Bourgain, Jean},
   author={Gamburd, Alex},
   title={On the spectral gap for finitely-generated subgroups of $\rm
   SU(2)$},
   journal={Invent. Math.},
   volume={171},
   date={2008},
   number={1},
   pages={83--121},
   issn={0020-9910},
   doi={10.1007/s00222-007-0072-z},
}

\bib{BG0}{article}{
   author={Bourgain, Jean},
   author={Gamburd, Alex},
   title={Uniform expansion bounds for Cayley graphs of ${\rm SL}_2(\mathbb
   F_p)$},
   journal={Ann. of Math. (2)},
   volume={167},
   date={2008},
   number={2},
   pages={625--642},
   issn={0003-486X},
   doi={10.4007/annals.2008.167.625},
}

\bib{BG2}{article}{
   author={Bourgain, Jean},
   author={Gamburd, Alex},
   title={A spectral gap theorem in ${\rm SU}(d)$},
   journal={J. Eur. Math. Soc. (JEMS)},
   volume={14},
   date={2012},
   number={5},
   pages={1455--1511},
   issn={1435-9855},
   doi={10.4171/JEMS/337},
}

\bib{BY}{article}{
   author={Bourgain, Jean},
   author={Yehudayoff, Amir},
   title={Expansion in ${\rm SL}_2(\mathbb{R})$ and monotone expanders},
   journal={Geom. Funct. Anal.},
   volume={23},
   date={2013},
   number={1},
   pages={1--41},
   issn={1016-443X},
   doi={10.1007/s00039-012-0200-9},
}

\bib{local}{article}{
   author={Boutonnet, R\'emi},
   author={Ioana, Adrian},
   author={Golsefidy, Alireza Salehi},
   title={Local spectral gap in simple Lie groups and applications},
   journal={Invent. Math.},
   volume={208},
   date={2017},
   number={3},
   pages={715--802},
   issn={0020-9910},
   doi={10.1007/s00222-016-0699-8},
}

\bib{hash}{article}{
   author={Charles, Denis X.},
   author={Lauter, Kristin E.},
   author={Goren, Eyal Z.},
   title={Cryptographic hash functions from expander graphs},
   journal={J. Cryptology},
   volume={22},
   date={2009},
   number={1},
   pages={93--113},
   issn={0933-2790},
   doi={10.1007/s00145-007-9002-x},
}

\bib{Cheng}{article}{
   author={Cheng, Q.},
   title={Sphere equivalence, property H, and Banach expanders},
   journal={Studia Math.},
   volume={233},
   date={2016},
   number={1},
   pages={67--83},
   issn={0039-3223},
}

\bib{Diestel-Jarchow-Tonge}{book}{
   author={Diestel, J.},
   author={Jarchow, H.},
   author={Tonge, A.},
   title={Absolutely summing operators},
   series={Cambridge Studies in Advanced Mathematics},
   volume={43},
   publisher={Cambridge University Press, Cambridge},
   date={1995},
   pages={xvi+474},
   isbn={0-521-43168-9},
   doi={10.1017/CBO9780511526138},
}

\bib{Drutu-Nowak}{article}{
    title = {Kazhdan projections, random walks and ergodic theorems},
    author = {Dru\c{t}u, C.},
    author = {Nowak, P. W.},
    journal = {J. Reine Angew. Math.},
    year = {2017},
    doi = {0.1515/crelle-2017-0002},
}

\bib{FNvL}{article}{
   author={Fisher, David},
   author={Nguyen, Thang},
   author={van Limbeek, Wouter},
   title={Rigidity of warped cones and coarse geometry of expanders},
   journal={Adv. Math.},
   volume={346},
   date={2019},
   pages={665--718},
   issn={0001-8708},
   doi={10.1016/j.aim.2019.02.015},
}

\bib{Gabber-Galil}{article}{
   author={Gabber, Ofer},
   author={Galil, Zvi},
   title={Explicit constructions of linear-sized superconcentrators},
   note={Special issued dedicated to Michael Machtey},
   journal={J. Comput. System Sci.},
   volume={22},
   date={1981},
   number={3},
   pages={407--420},
   issn={0022-0000},
   doi={10.1016/0022-0000(81)90040-4},
}

\bib{HLS}{article}{
   author={Higson, N.},
   author={Lafforgue, V.},
   author={Skandalis, G.},
   title={Counterexamples to the Baum-Connes conjecture},
   journal={Geom. Funct. Anal.},
   volume={12},
   date={2002},
   number={2},
   pages={330--354},
   issn={1016-443X},
   doi={10.1007/s00039-002-8249-5},
}

\bib{HoLiWi}{article}{
   author={Hoory, Shlomo},
   author={Linial, Nathan},
   author={Wigderson, Avi},
   title={Expander graphs and their applications},
   journal={Bull. Amer. Math. Soc. (N.S.)},
   volume={43},
   date={2006},
   number={4},
   pages={439--561},
   issn={0273-0979},
   doi={10.1090/S0273-0979-06-01126-8},
}

\bib{Kasparov-Yu-super}{article}{
   author={Kasparov, Gennadi},
   author={Yu, Guoliang},
   title={The coarse geometric Novikov conjecture and uniform convexity},
   journal={Adv. Math.},
   volume={206},
   date={2006},
   number={1},
   pages={1--56},
   issn={0001-8708},
   doi={10.1016/j.aim.2005.08.004},
}

\bib{KV}{article}{
   author={Khukhro, Ana},
   author={Valette, Alain},
   title={Expanders and box spaces},
   journal={Adv. Math.},
   volume={314},
   date={2017},
   pages={806--834},
   issn={0001-8708},
   doi={10.1016/j.aim.2017.05.007},
}
	
\bib{dlSdL}{article}{
   author={de Laat, T.},
   author={de la Salle, M.},
   title={Strong property (T) for higher-rank simple Lie groups},
   journal={Proc. Lond. Math. Soc. (3)},
   volume={111},
   date={2015},
   number={4},
   pages={936--966},
   issn={0024-6115},
   doi={10.1112/plms/pdv040},
}

\bib{dLV}{article}{
   author={de Laat, Tim},
   author={Vigolo, Federico},
   title={Superexpanders from group actions on compact manifolds},
   journal={Geom. Dedicata},
   volume={200},
   date={2019},
   pages={287--302},
   issn={0046-5755},
   doi={10.1007/s10711-018-0371-0},
}

\bib{Lafforgue1}{article}{
   author={Lafforgue, V.},
   title={Un renforcement de la propri\'et\'e (T)},
   language={French, with English and French summaries},
   journal={Duke Math. J.},
   volume={143},
   date={2008},
   number={3},
   pages={559--602},
   issn={0012-7094},
   doi={10.1215/00127094-2008-029},
}

\bib{Lafforgue2}{article}{
   author={Lafforgue, V.},
   title={Propri\'et\'e (T) renforc\'ee banachique et transformation de Fourier
   rapide},
   language={French, with English summary},
   journal={J. Topol. Anal.},
   volume={1},
   date={2009},
   number={3},
   pages={191--206},
   issn={1793-5253},
   doi={10.1142/S1793525309000163},
}

\bib{Liao}{article}{
   author={Liao, B.},
   title={Strong Banach property (T) for simple algebraic groups of higher
   rank},
   journal={J. Topol. Anal.},
   volume={6},
   date={2014},
   number={1},
   pages={75--105},
   issn={1793-5253},
   doi={10.1142/S1793525314500010},
}

\bib{exp in pure and applied}{article}{
   author={Lubotzky, Alexander},
   title={Expander graphs in pure and applied mathematics},
   journal={Bull. Amer. Math. Soc. (N.S.)},
   volume={49},
   date={2012},
   number={1},
   pages={113--162},
   issn={0273-0979},
   doi={10.1090/S0273-0979-2011-01359-3},
}

\bib{Ramanujan}{article}{
   author={Lubotzky, Alexander},
   author={Phillips, R.},
   author={Sarnak, P.},
   title={Ramanujan graphs},
   journal={Combinatorica},
   volume={8},
   date={1988},
   number={3},
   pages={261--277},
   issn={0209-9683},
   doi={10.1007/BF02126799},
}

\bib{Margulis}{article}{
   author={Margulis, G. A.},
   title={Explicit constructions of expanders},
   language={Russian},
   journal={Problemy Pereda\v ci Informacii},
   volume={9},
   date={1973},
   number={4},
   pages={71--80},
   issn={0555-2923},
}

\bib{Margulis-Ramanujan}{article}{
   author={Margulis, G. A.},
   title={Explicit group-theoretic constructions of combinatorial schemes
   and their applications in the construction of expanders and
   concentrators},
   language={Russian},
   journal={Problemy Peredachi Informatsii},
   volume={24},
   date={1988},
   number={1},
   pages={51--60},
   issn={0555-2923},
   translation={
      journal={Problems Inform. Transmission},
      volume={24},
      date={1988},
      number={1},
      pages={39--46},
      issn={0032-9460},
   },
}

\bib{Mendel-Naor}{article}{
   author={Mendel, M.},
   author={Naor, A.},
   title={Nonlinear spectral calculus and super-expanders},
   journal={Publ. Math. Inst. Hautes \'Etudes Sci.},
   volume={119},
   date={2014},
   pages={1--95},
   issn={0073-8301},
   doi={10.1007/s10240-013-0053-2},
}

\bib{Mendel-Naor-Hadamard}{article}{
   author={Mendel, M.},
   author={Naor, A.},
   title={Expanders with respect to Hadamard spaces and random graphs},
   journal={Duke Math. J.},
   volume={164},
   date={2015},
   number={8},
   pages={1471--1548},
   issn={0012-7094},
   doi={10.1215/00127094-3119525},
}

\bib{Mimura}{article}{
   author={Mimura, M.},
   title={Sphere equivalence, Banach expanders, and extrapolation},
   journal={Int. Math. Res. Not. IMRN},
   date={2015},
   number={12},
   pages={4372--4391},
   issn={1073-7928},
   doi={10.1093/imrn/rnu075},
}

\bib{Nowak-Sawicki}{article}{
   author={Nowak, Piotr W.},
   author={Sawicki, D.},
   title={Warped cones and spectral gaps},
   journal={Proc. Amer. Math. Soc.},
   volume={145},
   date={2017},
   number={2},
   pages={817--823},
   issn={0002-9939},
   doi={10.1090/proc/13258},
}

\bib{Oppenheim1}{article}{
	author={Oppenheim, Izhar},
	title={Averaged projections, angles between groups and strengthening of
    Banach property (T)},
    journal={Math. Ann.},
    volume={367},
    date={2017},
    number={1-2},
    pages={623--666},
    issn={0025-5831},
    doi={10.1007/s00208-016-1413-2},
}

\bib{Oppenheim2}{article}{
   author={Oppenheim, Izhar},
   title={Vanishing of cohomology with coefficients in representations on
   Banach spaces of groups acting on buildings},
   journal={Comment. Math. Helv.},
   volume={92},
   date={2017},
   number={2},
   pages={389--428},
   issn={0010-2571},
   doi={10.4171/CMH/415},
}

\bib{zig-zag}{article}{
   author={Reingold, Omer},
   author={Vadhan, Salil},
   author={Wigderson, Avi},
   title={Entropy waves, the zig-zag graph product, and new constant-degree
   expanders},
   journal={Ann. of Math. (2)},
   volume={155},
   date={2002},
   number={1},
   pages={157--187},
   issn={0003-486X},
   doi={10.2307/3062153},
}

\bib{Roe-cones}{article}{
   author={Roe, J.},
   title={Warped cones and property A},
   journal={Geom. Topol.},
   volume={9},
   date={2005},
   pages={163--178},
   issn={1465-3060},
   doi={10.2140/gt.2005.9.163},
}

\bib{dlS}{article}{
	author={de la Salle, M.},
    title={Towards strong Banach property (T) for ${\rm SL}(3,\mathbb{R})$},
    journal={Israel J. Math.},
    volume={211},
    date={2016},
    number={1},
    pages={105--145},
    issn={0021-2172},
    doi={10.1007/s11856-015-1262-9},
}

\bib{Sawicki-piecewise}{article}{
    author = {Sawicki, Damian},
    title = {Warped cones, (non-)rigidity, and piecewise properties},
    note = {With an appendix by Dawid Kielak and Damian Sawicki},
    journal = {Proc. Lond. Math. Soc. (3)},
    volume = {118},
    number = {4},
    pages = {753-786},
    doi = {10.1112/plms.12192},
    year = {2019}
}

\bib{Sawicki-cBCc}{article}{
    author={Sawicki, Damian},
    title={Warped cones violating the coarse Baum--Connes conjecture},
    note = {Available at \nolinkurl{www.impan.pl/~dsawicki/}},
    year = {2017},
}

\bib{SW}{article}{
	author = {Sawicki, Damian},
	author = {Wu, Jianchao},
	title = {Straightening warped cones},
	journal = {J. Topol. Anal.},
	note = {To appear}
}

\bib{Vigolo}{article}{
   author={Vigolo, Federico},
   title={Measure expanding actions, expanders and warped cones},
   journal={Trans. Amer. Math. Soc.},
   volume={371},
   date={2019},
   number={3},
   pages={1951--1979},
   issn={0002-9947},
   doi={10.1090/tran/7368},
}

\bib{fundamental Vigolo}{article}{
   author={Vigolo, Federico},
   title={Discrete fundamental groups of warped cones and expanders},
   journal={Math. Ann.},
   volume={373},
   date={2019},
   number={1-2},
   pages={355--396},
   issn={0025-5831},
   doi={10.1007/s00208-018-1768-7},
}

\bib{expgirth1}{article}{
   author={Willett, Rufus},
   author={Yu, Guoliang},
   title={Higher index theory for certain expanders and Gromov monster
   groups, I},
   journal={Adv. Math.},
   volume={229},
   date={2012},
   number={3},
   pages={1380--1416},
   issn={0001-8708},
   doi={10.1016/j.aim.2011.10.024},
}

\bib{A}{article}{
   author={Yu, Guoliang},
   title={The coarse Baum-Connes conjecture for spaces which admit a uniform
   embedding into Hilbert space},
   journal={Invent. Math.},
   volume={139},
   date={2000},
   number={1},
   pages={201--240},
   issn={0020-9910},
   doi={10.1007/s002229900032},
}
\end{biblist}
\end{bibsection}

\vspace{.5cm}
\end{document}